\newcommand{\R}{\mathbb R}
\newcommand{\C}{\mathbb C}
\newcommand{\Sp}{\mathrm{Sp}(2n)}
\newcommand{\Se}{\mathrm{Sp}_{\mathrm{ell}}^+(2n)}
\newtheorem{theorem}{Theorem}[section]
\newtheorem{definition}[theorem]{Definition}
\newtheorem{lemma}[theorem]{Lemma}
\newtheorem{cor}[theorem]{Corollary}
\newtheorem*{remark}{Remark}
\theoremstyle{definition}
\let\phi=\varphi
\newcommand{\summe}[3]{\sum\limits_{#1}^{#2}#3}
\newcommand{\abb}[3]{#1\colon #2\rightarrow #3}
\newcommand{\real}[1]{\mathbb{R}^{#1}}
\newcommand{\rem}[1]{}
\DeclareFontFamily{U}{mathb}{\hyphenchar\font45}
\DeclareFontShape{U}{mathb}{m}{n}{
<-6> mathb5 <6-7> mathb6 <7-8> mathb7
<8-9> mathb8 <9-10> mathb9
<10-12> mathb10 <12-> mathb12
}{}
\DeclareSymbolFont{mathb}{U}{mathb}{m}{n}
\DeclareMathSymbol{\llcurly}{\mathrel}{mathb}{"CE}
\DeclareMathSymbol{\ggcurly}{\mathrel}{mathb}{"CF}
\title{A causal characterisation of $\Se$}
\author{Jakob Hedicke}
\address{Centre de recherches mathématiques (CRM), Université de Montréal,2920 Chemin de la tour,Montréal (Québec), H3T 1J4, Canada.}
\email{jakob.hedicke@gmail.com}
\date{\today}
\begin{document}

\begin{abstract}
We show that the natural conjugation invariant cone structure on the linear symplectic group $\Sp$ is globally hyperbolic in the positively elliptic region $\Se$.
This answers a question by Abbondandolo, Benedetti and Polterovich and shows a formula for a bi-invariant Lorentzian distance function defined by these authors for elements in this region.
Moreover we give a characterisation of the positively elliptic region and of $-id\in\Sp$ in terms of the causality of this cone structure.
\end{abstract} 
\maketitle
\section{Main results}

Let $\omega$ be the canonical symplectic form on $\real{2n}$, given in standard coordinates $(x_1,\cdots, x_n,y_1\cdots,y_n)$ by
$$\omega=\summe{i=1}{n}dx_i\wedge dy_i.$$
The \textit{linear symplectic group} $\Sp$ can be defined as the group of linear maps of $\real{2n}$ that preserve $\omega$.
The group $\Sp$ is a smooth Lie-group whose Lie-algebra is given by
$$\mathfrak{sp}(2n):=\{X\in \mathrm{Mat}(2n,\R)| \omega(\cdot,X\cdot) \text{ is symmetric}\}.$$

It is well known that the positive definite symmetric bilinear forms induce a convex cone in the Lie-algebra of $\Sp$ given by
$$\mathfrak{sp}^+(2n):=\{X\in \mathfrak{sp}(2n)|\omega(\cdot,X\cdot) \text{ is positive definite}\}.$$

Positive paths of symplectic maps, i.e. paths whose tangent vector lies in $\mathfrak{sp}^+(2n)$ play a key role in the study of linear Hamiltonian systems and their stability (see e.g. \cite{Krein50, Krein51, Krein55, Ekeland12, Lalonde97}).

The cone $\mathfrak{sp}^+(2n)$ induces a bi-invariant closed cone structure $C\subset T\Sp$ in the sense of \cite{Minguzzi192} by setting
$$C(id):=\{X\in \mathfrak{sp}(2n)|\omega(\cdot,X\cdot) \text{ is positive semi-definite}\}$$
and $C(W):=WC(id)=C(id)W$ for $W\in \Sp$ (see. e.g. \cite{Abbondandolo22},\cite{Lalonde97}).
This cone structure is the unique bi-invariant closed cone structure on $\Sp$ that is a proper subset of $T\Sp$.
In \cite{Abbondandolo22} the authors define a natural bi-invariant Lorentz-Finsler metric $G$ on $C$.
 
Note that while $(\mathrm{Sp}(2),G)$ is isometric to the $3$-dimensional anti-de Sitter space $\mathrm{AdS}_3$, for $n>1$ the Lorentz-Finsler metric $G$ is not induced by a Lorentzian metric.
In particular $\partial C$ is not smooth and $G$ is smooth in the interior of $C$, but only extends continuously to $0$ at $\partial C$.

Moreover in \cite{Abbondandolo22} it is shown, that the timelike geodesics of $G$ are of the form $e^{tX}W$, where $W\in \Sp$ and $X\in \mathfrak{sp}^+(2n)$. 
It follows that $(\Sp,G)$ is totally vicious (i.e. there is a closed timelike curve through every point) since given $W\in\Sp$, the curve $e^{tJ}W$ is timelike and closed.
Here $J$ denotes the standard $\omega$-compatible complex structure on $\R^{2n}$.

Of particular interest for the theory of linear Hamiltonian systems are strongly stable systems.
These are linear Hamiltonian systems whose solution remains bounded for all times such that this property survives small perturbations of the system (see \cite{Ekeland12}).

Such systems can be studied in terms of Krein-theory \cite{Krein50,Krein51,Krein55}.
In particular a linear symplectic map gives rise to a strongly stable Hamiltonian system if all its (complex) eigenvalues lie on $S^1\setminus\{\pm 1\}$ and the Krein form $\kappa(v,w):= \langle iJ v,w\rangle$ is positive definite on the (complex) eigenspaces of eigenvalues with positive imaginary part.
Here $\langle\cdot,\cdot\rangle$ denotes the standard Hermitian product on $\C^{2n}$ and $J$ the canonical complex structure on $\R^{2n}$.

The set of all such symplectic maps is an open subset of $\Sp$ called the \emph{positively elliptic region} $\Se$.

Equivalently $W\in \Se$ if and only if there exists a splitting of $\R^{2n}$ into symplectic planes $V_1,\cdots,V_n$ and $\omega$-compatible complex structures $J_k$ on $V_k$ such that 
$$W =\bigoplus\limits_{k=1}^n e^{\theta_kJ_k},$$
where $\theta_k\in (0,\pi)$ (see \cite[Proposition ii.2.]{Abbondandolo22}).

The most important causal condition in the study of cone structures is global hyperbolicity.
A closed cone structure $(M,C)$ is globally hyperbolic if and only if there exists a Cauchy surface, i.e. an embedded hypersurface $\Sigma\subset M$ such that any inextensible curve tangent to the cone intersects $\Sigma$ in a unique point.
In the following we show

\begin{theorem}\label{thm1}
The positively elliptic region $\Se$ is globally hyperbolic. 
\end{theorem}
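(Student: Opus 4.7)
The plan is to exhibit a Cauchy time function on $(\Se, C|_\Se)$ whose level sets will be the sought Cauchy surfaces. For $W \in \Se$ the positive-Krein complex eigenspace $E^+(W) \subset \C^{2n}$ is a continuous $n$-dimensional complex $W$-invariant subspace on which $W$ acts unitarily with eigenvalues $e^{i\theta_1(W)}, \ldots, e^{i\theta_n(W)}$, each $\theta_k(W) \in (0, \pi)$; the unordered multiset $\{\theta_k(W)\}$ depends continuously on $W$. I would work with
\[
\tau(W) := \sum_{k=1}^n f(\theta_k(W)), \qquad f(\theta) := \log\frac{\theta}{\pi - \theta},
\]
which is continuous $\Se \to \R$ and diverges to $+\infty$ (resp.\ $-\infty$) precisely when $W$ approaches a boundary point of $\Se$ where some $\theta_k$ tends to $\pi$ (resp.\ $0$).

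The first substantive step is strict monotonicity of $\tau$ along future-directed causal curves. For a smooth causal curve $\gamma$ with generator $X(t) := \dot\gamma(t)\gamma(t)^{-1} \in C(\mathrm{id})$, the classical Krein perturbation computation for eigenvalues on a positive path yields $\dot\theta_k \geq 0$ for each $k$, with strict inequality whenever $\omega(\cdot, X(t)\cdot)$ is positive definite. Since $f$ is strictly increasing, the same holds for $\tau$, and a standard smoothing argument extends the statement to locally Lipschitz causal curves. Together with continuity, this implies each level set $\tau^{-1}(c)$ is hit at most once by an inextensible causal curve in $\Se$.

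It remains to show that $\tau\circ\gamma$ actually attains every value in $\R$ along any inextensible causal curve $\gamma$. If a future-inextensible $\gamma$ approaches $\partial\Se$ at finite parameter time, monotonicity forbids any $\theta_k$ from decreasing to $0$, so the only way to reach $\partial\Se$ is for some $\theta_k\to\pi$; the choice of $f$ then forces $\tau\to+\infty$. The past case is symmetric. The main obstacle is the remaining scenario, in which $\gamma(t)$ escapes to infinity in $\Sp$ while all its angles stay in a compact subinterval of $(0,\pi)$: the set $\Se$ is not relatively compact in $\Sp$, since the positive Krein subspace $E^+(W)$ ranges over the non-compact Siegel half space $\Sp/U(n)$ and bounded angles do not bound $\|W\|$. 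I expect the technical heart of the proof to lie in ruling this scenario out via an analysis of the dynamics induced by positive paths on the space of positive complex Lagrangians: one wants to show that $E^+(\gamma(t))$ cannot wander off to infinity in this symmetric space without some $\theta_k$ being driven to $\{0,\pi\}$. If that approach fails, one can try instead to refine $\tau$ by a term measuring the ``height'' of $E^+$ in the Siegel space, at the cost of a more delicate monotonicity argument.
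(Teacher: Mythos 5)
Your overall strategy (exhibit a Cauchy time function whose level sets are Cauchy surfaces) is legitimate, and your function $\tau$ is exactly the time function used in the paper; the monotonicity step corresponds to the paper's first lemma. But the proposal has a genuine gap precisely at the point you yourself flag: you never rule out the scenario in which an inextensible causal curve escapes to infinity in $\Sp$ while all angles $\theta_k$ stay in a compact subinterval of $(0,\pi)$, so that $\tau$ remains bounded and the level sets fail to be Cauchy. Saying that you ``expect the technical heart of the proof'' to lie in an analysis of the induced dynamics on the space of positive complex Lagrangians, with a fallback of modifying $\tau$, is a statement of intent, not an argument; as it stands the proof of global hyperbolicity is incomplete. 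The paper closes exactly this hole by a different mechanism: first a causal geodesic connectedness lemma (proved by a homotopy argument based on Ekeland's Propositions 1 and 2, plus a limit-curve argument) shows that any point of $J^+(W_0)$ inside $\Se$ is reached from $W_0$ by a curve $t\mapsto e^{tX}W_0$ staying in $\Se$; then a separate lemma shows that every such one-parameter translate $e^{tX}W_0$ must leave $\Se$ at finite parameter values in both directions, with $\tau\to\pm\infty$ there. The non-nilpotent case of that lemma uses the Maslov quasimorphism (quasi-additivity forces the rotation number to diverge, so eigenvalues $\pm1$ are attained), and the nilpotent case uses that the coefficients of the characteristic polynomial of $e^{tX}W_0$ are polynomials in $t$, at least one non-constant, so some eigenvalue diverges. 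Compactness of $J^+(W_0)\cap J^-(W_1)$ is then obtained directly from these two lemmas together with $\tau$, rather than by verifying the Cauchy property of $\tau$ first (that property is recorded only afterwards as a consequence). Your proposal contains no substitute for these two lemmas, which is where the actual work lies.

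A secondary point: for $\tau$ to be a time function you need strict increase along all causal curves, not only timelike ones. Your Krein perturbation argument as stated gives $\dot\theta_k\geq 0$ in general and strictness only when $\omega(\cdot,X(t)\cdot)$ is positive definite. The paper handles the merely causal case by observing that for a nonzero positive semi-definite generator there is still an eigenvector $x$ of $W(t)$ with $\omega(x,W'(t)x)>0$, and then running the Ekeland eigenvalue-variation computation on that eigenvector to get strict increase of at least one angle; some such argument is needed in your write-up as well.
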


This answers \cite[Question J.5.]{Abbondandolo22}.
Moreover as a consequence we can generalise \cite[Theorem J.1 (i)]{Abbondandolo22}.
Recall that in \cite{Abbondandolo22} the authors introduced a bi-invariant Lorentz-Finsler metric on the interior of $C$.
It is defined on $\mathfrak{sp}^+(2n)$ as
$$G(X):=\det(X)^{\frac{1}{2n}}.$$
This allows to define the Lorentzian-length of a curve tangent to $\mathrm{Int}(C)$.

Consider the universal cover $\widetilde{\Sp}$.
Let $\widetilde{\Se}$ be the unique lift of $\Se$ whose boundary contains the identity in $\widetilde{\Sp}$. 
The cone structure on $\Sp$ lifts to a causal cone structure on $\widetilde{\Sp}$ (see \cite[Theorem I.1.]{Abbondandolo22}), i.e. a cone structure without any closed curves tangent to the cone.
The Lorentz-Finsler metric lifts to $\widetilde{\Sp}$ and induces a Lorentzian distance function $\mathrm{dist}_G$ defined for $w_1,w_2\in\widetilde{\Sp}$ as the supremum over the lengths of all curves tangent to $C$ connecting $w_1$ and $w_2$ (see e.g. \cite{Beem} for a detailed treatment of Lorentzian distances and their properties).

\begin{cor}\label{cor1}
Let $w$ be in the closure of $\widetilde{\Se}$ with eigenvalues 

$\{e^{\pm i\theta_1},\cdots, e^{\pm i\theta_n}\}$ for $\theta_k\in [0,\pi]$.
Then 
$$\mathrm{dist}_G(id,w)=\sqrt[n]{\theta_1\cdots\theta_n}.$$
\end{cor}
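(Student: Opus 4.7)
The plan is to establish matching lower and upper bounds on $\mathrm{dist}_G(id, w)$, with the main new input being the global hyperbolicity of $\widetilde{\Se}$ from Theorem~\ref{thm1}.

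For the lower bound I would use the block decomposition $w = \bigoplus_{k=1}^n e^{\theta_k J_k}$ recalled in the introduction (valid for semisimple $w \in \overline{\Se}$) and set $X := \bigoplus_{k=1}^n \theta_k J_k \in C(id)$. The path $\gamma(t) := e^{tX}$, $t \in [0,1]$, has eigenvalues $e^{\pm it\theta_k}$ with $t\theta_k \in [0,\pi]$, so it stays in $\overline{\Se}$; its unique lift to $\widetilde{\Sp}$ starting at $id$ ends at the prescribed $w \in \overline{\widetilde{\Se}}$, because by definition $\widetilde{\Se}$ is the lift of $\Se$ whose boundary contains the identity. The Lorentzian length of $\gamma$ is $G(X) = \det(X)^{1/2n} = \sqrt[n]{\theta_1 \cdots \theta_n}$, which gives the lower bound.

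For the matching upper bound I would first treat $w$ in the interior $\widetilde{\Se}$, so all $\theta_k \in (0,\pi)$. By Theorem~\ref{thm1}, $\widetilde{\Se}$ is globally hyperbolic, so $id$ and $w$ are joined by a length-maximising causal geodesic lying in $\overline{\widetilde{\Se}}$. The curve from the lower bound shows $w$ is in the chronological future of $id$, so the maximiser is timelike, hence of the form $t \mapsto e^{tY}$ with $Y \in \mathfrak{sp}^+(2n)$ and $e^Y = w$. Writing the eigenvalues of $Y$ as $\pm i\mu_k$ with $\mu_k > 0$, the requirement that the geodesic stays in $\overline{\widetilde{\Se}}$ forces each $\mu_k \in (0,\pi)$, so $\{\mu_k\} = \{\theta_k\}$ as multisets and the length is $G(Y) = \sqrt[n]{\theta_1 \cdots \theta_n}$; this already recovers \cite[Theorem J.1 (i)]{Abbondandolo22} unconditionally. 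For $w \in \partial\widetilde{\Se}$, approximate $w$ by a sequence $w_m \in \widetilde{\Se}$ with angles $\theta_k^{(m)} \to \theta_k$ and apply lower semicontinuity of the Lorentzian distance (a general fact, cf.\ \cite{Beem}) to obtain $\mathrm{dist}_G(id, w) \le \liminf_m \mathrm{dist}_G(id, w_m) = \sqrt[n]{\theta_1 \cdots \theta_n}$.

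The main technical obstacle I anticipate is the eigenvalue pinning in the interior identification: confining each $\mu_k$ to the fundamental interval $(0,\pi)$, rather than to the spurious lifts $\theta_k + 2\pi m$ with $m \ne 0$, requires combining the explicit topological description of the lift $\widetilde{\Se}$ (boundary through $id$) with the global hyperbolicity of Theorem~\ref{thm1}, since both ingredients are needed to rule out longer maximising geodesics that wrap around the universal cover. A secondary delicate point is extending the lower-bound construction to non-semisimple boundary elements; this can be handled either by a limit/concatenation argument from nearby semisimple approximants or by generalising the ansatz $X = \sum_k \theta_k J_k$ to incorporate a nilpotent perturbation while staying in the closed cone $C(id)$.
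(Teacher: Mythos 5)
Your strategy is essentially the paper's (explicit one-parameter geodesic for the lower bound, Theorem \ref{thm1} plus an Avez--Seifert argument for a maximiser, the classification of timelike geodesics as $e^{tX}B$, and lower semicontinuity at the boundary), but there is a genuine gap in your upper bound. The quantity $\mathrm{dist}_G(id,w)$ is by definition a supremum over \emph{all} causal curves in $\widetilde{\Sp}$ from $id$ to $w$, not only over curves contained in $\widetilde{\Se}$. Global hyperbolicity of $(\widetilde{\Se},C)$ is an intrinsic property of the cone structure restricted to that open set: it yields a maximiser among curves staying in $\widetilde{\Se}$, but it says nothing about causal curves joining $id$ to $w$ which leave $\widetilde{\Se}$, and neither does the topological description of the lift. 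So your sentence ``$id$ and $w$ are joined by a length-maximising causal geodesic lying in $\overline{\widetilde{\Se}}$'' is precisely the point that needs proof, and the combination of ingredients you name as the ``main technical obstacle'' does not supply it. The missing ingredient, invoked explicitly in the paper, is the \emph{causal convexity} of $\widetilde{\Se}$ (and of its translates) in $\widetilde{\Sp}$: a causal curve can exit the region only when the eigenvalue $-1$ is attained and re-enter only through the eigenvalue $1$, which is incompatible with the time function of \cite{Abbondandolo22}; hence causal curves between points of the region stay in it. Only with causal convexity does a maximiser inside the region maximise the ambient length, so that the geodesic $e^{tX}$ with $X=\bigoplus_k\theta_kJ_k$ computes $\mathrm{dist}_G$ in $\widetilde{\Sp}$, and only then is your pinning $\mu_k\in(0,\pi)$ (the uniqueness in \cite[Proposition ii.2.]{Abbondandolo22}) available. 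A second, smaller problem: $id$ lies on $\partial\widetilde{\Se}$, not in $\widetilde{\Se}$, so Avez--Seifert in $\widetilde{\Se}$ does not apply to the pair $(id,w)$ as you state it. The paper fixes both issues at once by choosing $A\in I^-(id)$ close to $id$: by bi-invariance $A\widetilde{\Se}$ is again globally hyperbolic and causally convex, it contains both $id$ and $w$, and Avez--Seifert is applied there, producing a maximiser for the length in all of $\widetilde{\Sp}$.

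On the remaining points you agree with, or are more careful than, the paper: the lower-bound curve is the paper's geodesic, and the boundary case via lower semicontinuity of $\mathrm{dist}_G$ is exactly the paper's one-line argument (it gives the upper bound at boundary points from the interior formula). Your concern about non-semisimple elements of $\partial\widetilde{\Se}$, where the lower bound requires an approximation or a concatenation through the semisimple part, is legitimate; the paper does not address it either, so flagging it is a point in your favour rather than a defect relative to the paper's proof.
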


Similar arguments provide a causal characterisation of $\Se$ and $-id\in\Sp$.
Denote by $I^+(w)$ and $I^-(w)$ the chronological future and past of $w\in\widetilde{\Sp}$, i.e. the set of points that are connected to $w$ by a curve tangent to $\mathrm{Int}(C)$ and $-\mathrm{Int}(C)$, respectively.
\newpage
\begin{theorem}\label{thm2}
Let $\widetilde{-id}$ be the unique lift of $-id$ lying on the boundary of $\widetilde{\Se}$.
Then
\begin{itemize}
\item[i)] $\widetilde{\Se}=I^+(id)\cap I^-(\widetilde{-id})$.
\item[ii)] If $I^+(id)\cap I^-(w)\neq\emptyset$ is globally hyperbolic for some $w\in \widetilde{\Sp}$ then $w$ lies in the closure of $\widetilde{\Se}$.
\item[iii)] If $I^+(id)\cap I^-(w)=\widetilde{\Se}$ then $w=\widetilde{-id}$.
\end{itemize}
\end{theorem}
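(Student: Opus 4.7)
The plan is to combine Theorem \ref{thm1} with Corollary \ref{cor1} and the canonical decomposition of elements of $\widetilde{\Se}$.

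For the inclusion $\widetilde{\Se}\subseteq I^+(id)\cap I^-(\widetilde{-id})$ in part (i), I use the canonical form: given $w=\bigoplus_{k=1}^n e^{\theta_k J_k}$ with $\theta_k\in(0,\pi)$, the elements $X:=\bigoplus\theta_k J_k$ and $Y:=\bigoplus(\pi-\theta_k)J_k$ both lie in $\mathfrak{sp}^+(2n)$, so $t\mapsto e^{tX}$ and $t\mapsto e^{tY}w$ are timelike geodesics joining $id$ to $w$ and $w$ to a lift of $-id$. Convexity of $\mathfrak{sp}^+(2n)$ furnishes a homotopy through timelike curves from the concatenation to the canonical timelike curve $t\mapsto e^{tJ}$, $t\in[0,\pi]$, identifying this lift as $\widetilde{-id}$. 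For the reverse inclusion, let $w\in I^+(id)\cap I^-(\widetilde{-id})$; Corollary \ref{cor1} and the reverse triangle inequality give $\pi=\dist_G(id,\widetilde{-id})\geq\dist_G(id,w)+\dist_G(w,\widetilde{-id})$ with both summands strictly positive. It remains to rule out $w\notin\overline{\widetilde{\Se}}$: a timelike curve from $id$ that leaves $\overline{\widetilde{\Se}}$ must cross the future portion of $\partial\widetilde{\Se}$, at which Corollary \ref{cor1} forces $\dist_G(id,\cdot)\geq\pi$. This contradicts the positivity of $\dist_G(w,\widetilde{-id})$, so $w\in\overline{\widetilde{\Se}}$; openness of the diamond upgrades this to $w\in\widetilde{\Se}$.

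For part (ii), I argue the contrapositive. Suppose $w\notin\overline{\widetilde{\Se}}$ and $I^+(id)\cap I^-(w)\neq\emptyset$. The total viciousness of $(\Sp,G)$---the closed timelike loop $t\mapsto e^{tJ}$ in $\Sp$---lifts in $\widetilde{\Sp}$ to a complete timelike geodesic passing through every lift of $-id$; for $w$ outside $\overline{\widetilde{\Se}}$ the diamond contains segments of (shifts of) this geodesic of arbitrarily large Lorentzian length, showing that $\dist_G$ is unbounded on the diamond. By the Avez--Seifert theorem in the Lorentz--Finsler setting of \cite{Minguzzi192}, the Lorentzian distance inside a globally hyperbolic set is finite-valued and realised by a geodesic; this forces a contradiction, so the diamond fails to be globally hyperbolic.

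For part (iii), assume $I^+(id)\cap I^-(w)=\widetilde{\Se}$. Theorem \ref{thm1} makes the diamond globally hyperbolic, so part (ii) gives $w\in\overline{\widetilde{\Se}}$. Causality of the lifted cone structure on $\widetilde{\Sp}$ forbids $w\in I^-(w)$, so $w\notin\widetilde{\Se}$ and $w\in\partial\widetilde{\Se}$. The equality further yields $\widetilde{\Se}\subseteq I^-(w)$, i.e.\ $w$ chronologically follows every point of $\widetilde{\Se}$; by part (i), approaching $\widetilde{-id}$ from inside $\widetilde{\Se}$ and taking boundary limits shows that the only boundary point chronologically dominating all of $\widetilde{\Se}$ is $\widetilde{-id}$ itself (boundary points at which some eigenvalue angle $\theta_k$ has dropped below $\pi$ fail to chronologically dominate interior configurations approaching $\widetilde{-id}$). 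Thus $w=\widetilde{-id}$. The main obstacle I anticipate is the reverse inclusion of part (i): it requires a careful description of the future part of $\partial\widetilde{\Se}$ inside $\widetilde{\Sp}$ and of the distance function near it; once this is in hand, parts (ii) and (iii) follow from standard causal arguments (finiteness of timelike distance in globally hyperbolic sets together with the maximality property characterising $\widetilde{-id}$).
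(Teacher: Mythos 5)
The forward inclusion of (i) is fine and coincides with the paper's argument, but the reverse inclusion — the crux of (i) — rests on a false inequality. You claim that a timelike curve from $id$ leaving $\overline{\widetilde{\Se}}$ crosses the future part of $\partial\widetilde{\Se}$ at a point where Corollary \ref{cor1} forces $\mathrm{dist}_G(id,\cdot)\geq\pi$. Corollary \ref{cor1} gives exactly the opposite: a causal curve exits $\widetilde{\Se}$ at a point whose angles satisfy $\theta_k=\pi$ for \emph{some} $k$, while the remaining angles can be arbitrarily small, so $\mathrm{dist}_G(id,\cdot)=\sqrt[n]{\theta_1\cdots\theta_n}\leq\pi$ there, with equality only at $\widetilde{-id}$ itself. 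Consequently the reverse triangle inequality $\pi\geq \mathrm{dist}_G(id,w)+\mathrm{dist}_G(w,\widetilde{-id})$ produces no contradiction, and in fact no purely distance-based argument can: both the exit point of a curve leaving $\widetilde{\Se}$ and the re-entry point of a curve heading to $\widetilde{-id}$ can be at tiny Lorentzian distance from $id$ and $\widetilde{-id}$ respectively. The paper's proof uses a different, finer invariant: by Ekeland's Proposition 2 a causal curve can leave $\widetilde{\Se}$ only through a point having eigenvalue $-1$ but not $1$, and must re-enter (before reaching $\widetilde{-id}$) through a point having eigenvalue $1$ but not $-1$; the time function of Abbondandolo--Benedetti--Polterovich built from the Maslov quasimorphism then orders these two boundary points the wrong way, contradicting the chronological relation. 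Some input of this rotational/Krein-theoretic type is indispensable and is missing from your argument.

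Parts (ii) and (iii) have similar gaps. In (ii) the central claim — that for $w\notin\overline{\widetilde{\Se}}$ the diamond $I^+(id)\cap I^-(w)$ contains timelike segments of arbitrarily large $G$-length (shifts of the lift of $t\mapsto e^{tJ}$) — is asserted without any justification; nothing you say explains why $I^-(w)$ should contain such segments, and this is essentially the whole content of the statement. The paper instead reduces to a point $W_1\in I^+(id)\cap\partial\widetilde{\Se}$ with compact $J^+(id)\cap J^-(W_1)$, writes $W_1=e^X$ with $X\in C(id)$ via a limit-curve argument, uses the symplectic splitting of $W_1$ to build an inextensible null geodesic through $W_1$ whose points never have eigenvalue $1$, and contradicts compactness because the geodesic must hit $\partial J^+(id)$, whose points are of the form $e^Y$ with $Y\in\partial C(id)$ and hence do have eigenvalue $1$. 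In (iii), your parenthetical claim that no boundary point other than $\widetilde{-id}$ can chronologically dominate all of $\widetilde{\Se}$ is precisely what has to be proved and is only asserted; the paper closes this in two lines by noting that $I^+(id)\cap I^-(w)=\widetilde{\Se}$ forces $w\in\overline{I^-(\widetilde{-id})}$ and $\widetilde{-id}\in\overline{I^-(w)}$, and then invoking the time function on $\widetilde{\Sp}$.
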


\begin{remark}
Theorem \ref{thm2} i) and ii) in particular show that $I^+(id)\cap I^-(w)\subset \widetilde{\Se}$, i.e. $\widetilde{\Se}$ is the union of all sets of the form $I^+(id)\cap I^-(w)$, where $w\in \widetilde{\Sp}$ is such that $I^+(id)\cap I^-(w)$ is globally hyperbolic.
\end{remark}

\textbf{Acknowledgements.}

I would like to thank Alberto Abbondandolo and Stefan Suhr for introducing me to this question and for many fruitful discussions.
Moreover I would like to thank Ettore Minguzzi for suggesting a nice time function on $\Se$.
This research was supported by the SFB/TRR 191 ``Symplectic Structures in Geometry, Algebra and Dynamics'', funded by the Deutsche Forschungsgemeinschaft (Projektnummer 281071066 – TRR 191) and the Centre de recherches mathématiques (CRM).

\section{Causality theory for closed cone structures}

In this section we recall some basic facts about causality theory and cone structures.
Closed cone structures are a natural generalisation of Lorentzian manifolds.

\begin{definition}
A closed proper cone $C\subset V$ in a vector space $V$ is a closed convex subset of $V\setminus \{0\}$ with non-empty interior such that $\lambda v\in C$ if $v\in C$ and $\lambda>0$.
A closed proper cone structure on a smooth manifold $M$ is a closed subset $C\subset TM$ such that $C(p):=C\cap T_pM$ is a closed proper cone for all $p\in M$.
\end{definition}

\begin{remark}
The set
$$C:=\bigcup\limits_{W\in\Sp}C(W)$$
defines a closed proper cone structure on $\Sp$.
This cone structure is Lipschitz in the sense of \cite{Fathi12, Minguzzi192}. 
\end{remark}

Similar to Lorentzian geometry, a cone structure on a smooth manifold $M$ gives rise to the notion of causality.
A smooth curve $\gamma$ is called \textit{timelike} if $\gamma'\subset \mathrm{Int}(C)$, \textit{null} if $\gamma'\in \partial C\setminus \{0\}$ and \textit{causal} if $\gamma'\subset C$.
Then the \textit{chronological future and past} of a point $p$ can be defined as
\begin{align*}
I^+(p):=\{q\in M|\text{ there exists a timelike curve from $p$ to $q$}\}\\
I^-(p):=\{q\in M|\text{ there exists a timelike curve from $q$ to $p$}\}
\end{align*}

and the \textit{causal future and past} as

\begin{align*}
J^+(p):=\{q\in M|\text{ there exists a causal curve from $p$ to $q$}\}\cup \{p\}\\
J^-(p):=\{q\in M|\text{ there exists a causal curve from $q$ to $p$}\}\cup \{p\}.
\end{align*}

Finally the \textit{future/past horismos} of $p$ is defined as $E^{\pm}(p):=J^{\pm}(p)\setminus I^{\pm}(p)$.
Given a set $S\subset M$ we define $I^{\pm}(S)$ and $J^{\pm}(S)$ to be the union of all chronological (causal) futures/pasts of points in $S$.

The cone structure $(M,C)$ is called \textit{causal} if there are no closed causal loops and \textit{strongly causal} if every point $p\in M$ admits an open neighbourhood $U$ such that any causal curve with endpoints in $U$ is contained in $U$ ($U$ is causally convex).
Obviously strongly causal cone structures are causal.
If there exists a time function on $(M,C)$, i.e. a continuous function strictly increasing along causal curves then we call $(M,C)$ \textit{stably causal}.
Note that stably causal cone structures are strongly causal.

The strongest causal condition for cone structures is global hyperbolicity.
A cone structure $(M,C)$ is called \textit{globally hyperbolic} if it is causal and the sets $J^{+}(p)\cap J^-(q)$ are compact for all $p,q\in M$.
Global hyperbolicity in particular implies the existence of a Cauchy surface $\Sigma$, i.e. a hypersurface in $M$ that is intersected in a unique point by any inextensible causal curve and a smooth splitting $M\cong \R\times \Sigma$ (see e.g. \cite{Bernard18, Minguzzi192}).

Given a cone structure one can define the notion of null geodesic.
In the case of Lorentzian metrics this notion coincides (up to parametrisation) with the usual one.

\begin{definition}
A null geodesic of a cone structure $C$ is a continuous curve $\abb{\gamma}{I}{N}$ that is locally horismotic.
This means that for every $t_0\in I$ and any open neighbourhood $U$ of $\gamma(t_0)$ there exists an open neighbourhood $\gamma(t_0)\in V\subset U$ such that if $\gamma([t_0-\epsilon,t_0+\epsilon]\cap I)\subset V$, then $\gamma(s_1)\in E_{C_U}^+(\gamma(s_0))$ for all $s_0,s_1\in [t_0-\epsilon,t_0+\epsilon]\cap I$ with $s_0<s_1$.
Here $C_U$ denotes the restriction of the cone structure $C$ to the neighbourhood $U$.
\end{definition}

In the case of strongly causal cone structures a null geodesic is just a curve $\gamma(t)$ with the property that for any $t_0$ there exists an $\epsilon>0$ such that $\gamma(t_2)\in E^+(\gamma(t_1))$ for all $t_0-\epsilon<t_1\leq t_2<t_0+\epsilon$. 

\section{The positively elliptic region and the Maslov quasimorphism}
In this section we introduce some basic facts about $\Se$ and the Maslov quasimorphism $\abb{\mu}{\widetilde{\Sp}}{\R}$ defined on the universal cover of the symplectic group.

Recall that by $\Sp$ we denote the linear symplectic group, i.e. the group of linear maps preserving the canonical symplectic form on $\R^{2n}$.
As mentioned in the introduction the positively elliptic region $\Se$ can be defined as an open subset of strongly stable symplectic maps.
Another characterisation can be given in terms of Krein theory (see e.g. \cite{Ekeland12}):

The canonical symplectic form on $\R^{2n}$ can be extended to a skew-hermitian form 
$$\omega\colon\C^{2n}\times \C^{2n}\rightarrow\C^{2n}.$$
The \textit{ Krein form} can be defined as
$$\kappa:=-i\omega.$$
Note that $\kappa$ is a Hermitian form on $\C^{2n}$.

Consider $W\in\Sp$ as a linear map on $\C^{2n}$.
Given a complex eigenvalue $\lambda$, the form $\kappa$ is non-degenerate on the generalised eigenspace $E_{\lambda}$ if and only if $\lambda$ lies on the unit circle.
Another eigenvalue of $W$ is $\overline{\lambda}$.
If $\lambda$ lies on the unit circle and $\kappa$ has signature $(p,q)$ on $E_{\lambda}$, it has signature $(q,p)$ on $E_{\overline{\lambda}}$.
In particular the signature of $\kappa$ on $E_{\pm1}$ is always $(p,p)$.

\begin{definition}
The positively elliptic region $\Se$ is defined as the set of $W\in\Sp$ such that all (complex) eigenvalues of $W$ are contained in $S^1\setminus\{\pm 1\}$ and $\kappa$ is positive definite on $E_{\lambda}$ if and only if $\lambda$ is an eigenvalue of $W$ with positive imaginary part.
\end{definition}

Due to the definiteness of $\kappa$ on the eigenspaces, the positively elliptic region forms an open subset of $\Sp$.
Moreover all maps in $\Se$ are diagonalisable as \cite[Proposition ii.2.]{Abbondandolo22} shows.
The proposition states that the exponential map $\mathrm{exp}\colon \mathfrak{sp}(2n)\rightarrow \Sp$ maps the set
$$\mathfrak{sp}(2n)_{\mathrm{ell}}^+:=\{X\in\mathfrak{sp}^+(2n)|\sigma(X)\subset i(-\pi,\pi)\}$$
diffeomorphically to $\Se$.
Moreover $W\in\Se$ if and only if there exists a  $W$-invariant splitting of $\R^{2n}$ into symplectic planes $V_k$ such that 
$$W =\bigoplus\limits_{k=1}^n e^{\theta_kJ_k}.$$
Here $\theta_k\in (0,\pi)$ and $J_k$ denotes an $\omega$-invariant complex structure on $V_k$.

Due to the non-degeneracy of the Krein form causal paths behave particularly nice on $\Se$ (\cite{Ekeland12}).
For instance it is known that along a timelike curve the eigenvalues with positive imaginary part rotate counterclockwise, while the once with negative imaginary part rotate clockwise on $S^1$.
This means that the $\theta_k$ in the splitting above are increasing along causal curves.
Moreover any causal path $W(t)$ can leave $\Se$ only once the eigenvalue $-1$ is attained. 

The behaviour of eigenvalues along a causal curve can be estimated using the Maslov quasimorphism which we briefly introduce.
For details see e.g. \cite{Abbondandolo01, Abbondandolo22}.

Denote by $\widetilde{\Sp}$ the universal cover of the symplectic group.
The universal cover has a natural group structure with neutral element being a lift of the identity.
We denote the neutral element by $id$ and by $\widetilde{\Se}$ the unique lift of $\Se$ such that $id$ lies on the boundary.

The Maslov quasimorphism 
$$\mu\colon \widetilde{\Sp}\rightarrow \R$$
was introduced in \cite{Gelfand58} as the unique homogeneous real quasimorphism whose restriction to the lift of $\mathrm{U}(n)\subset\Sp$ agrees with the complex determinant.
Here the term quasimorphism means that there exists a constant $C$ such that 
$$|\mu(vw)-\mu(v)-\mu(w)|\leq C$$
for all $v,w\in\widetilde{\Sp}$.
In \cite{Abbondandolo22} this quasimorphism is used to construct a time function on $\widetilde{\Sp}$.

For $W\in \Sp$ define
$$\nu(W):=(-1)^m\prod\limits_{\lambda\in \sigma(W)\cap S^1\setminus\{\pm 1\}}\lambda^{p(\lambda)},$$
where $2m$ is the total algebraic multiplicity of all negative real eigenvalues and $(p(\lambda),q(\lambda))$ the signature of $\kappa$ on $E_{\lambda}$.
As pointed out in \cite{Abbondandolo22} the Maslov quasimorphism can be computed as the unique continuous function satisfying
$$\mu(w)=e^{2\pi i\nu(W)}, \mu(id)=0.$$
Here $W$ denotes the projection of $w$ to $\Sp$.

Given $W\in\Se$ denote by $\{\theta_1^W,\cdots, \theta_n^W\}$ the arguments of the eigenvalues of $W$ with positive imaginary part.
Let $w\in\widetilde{\Se}$ be the lift of $W$.
Then 
$$\mu(w)= \frac{1}{2\pi}(\theta^W_1+\cdots+\theta^W_n).$$

\section{Proofs}

As before for $W\in\Se$ denote by $\{\theta_1^W,\cdots, \theta_n^W\}$ the arguments of the eigenvalues of $W$ with positive imaginary part.
Define 
\begin{align*}
&\abb{\tau}{\Se}{\R}\\
&W\mapsto \sum\limits_{i}\ln (\theta^W_i)-\ln(\pi-\theta^W_i)
\end{align*}

\begin{lemma}\label{lem1}
The function $\tau$ is a continuous time function on $\Se$, i.e. $\tau$ is strictly increasing along causal curves.
\end{lemma}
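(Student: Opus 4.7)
My plan is to verify continuity and strict monotonicity of $\tau$ along causal curves separately, both relying on the spectral description of elements of $\Se$.

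Continuity follows from basic spectral theory. For $W\in\Se$ the spectrum consists of conjugate pairs $e^{\pm i\theta_i^W}$ with $\theta_i^W\in(0,\pi)$, and the assignment $W\mapsto\{\theta_1^W,\dots,\theta_n^W\}$ is continuous into the symmetric product $\mathrm{Sym}^n((0,\pi))$. As $f(x):=\ln x-\ln(\pi-x)$ is continuous on $(0,\pi)$ and $\tau(W)=\sum_i f(\theta_i^W)$ is symmetric in the arguments, $\tau$ descends to a well-defined continuous function on $\Se$.

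For monotonicity, let $W\colon[a,b]\to\Se$ be a causal curve with $a<b$, and fix a continuous labeling $\theta_i(t)\in(0,\pi)$ of the arguments of the positive-imaginary-part eigenvalues of $W(t)$, which is possible since the spectrum avoids $\pm 1$ throughout. By the Krein-theoretic fact recalled in Section 3 (cf. \cite{Ekeland12}), each $\theta_i(t)$ is non-decreasing. Because $f$ is strictly increasing on $(0,\pi)$ (as $f'(x)=1/x+1/(\pi-x)>0$), each $f(\theta_i(t))$ is non-decreasing, and hence so is $\tau\circ W$. To upgrade to strict monotonicity I would argue that along any non-constant causal curve some $\theta_i$ strictly increases. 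Concretely, at a time $t_0$ where $\dot W(t_0)=X\,W(t_0)$ with $X\in C(id)\setminus\{0\}$ and $v_i$ an eigenvector of $W(t_0)$ for $e^{i\theta_i(t_0)}$, first-order eigenvalue perturbation gives
$$\dot\theta_i(t_0)=\frac{q_{\C}(v_i,v_i)}{\kappa(v_i,v_i)},$$
where $q_{\C}(v,w):=-i\omega(\bar v,X w)$ is the Hermitian extension of the positive semi-definite symmetric form $v\mapsto\omega(v,Xv)$ on $\R^{2n}$. On $\Se$ the Krein form is positive on each $v_i$, while $q_{\C}$ is a non-zero positive semi-definite Hermitian form on $\C^{2n}$; since the eigenvectors of $W$ span $\C^{2n}$, at least one $q_{\C}(v_{i_0},v_{i_0})>0$ and so $\dot\theta_{i_0}(t_0)>0$. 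Integrating over $[a,b]$ this forces $\sum_i\theta_i(b)>\sum_i\theta_i(a)$ on any non-constant causal curve, and combined with the non-decrease of each individual $\theta_i$ it yields $\theta_{i_0}(b)>\theta_{i_0}(a)$ for some $i_0$. The strict monotonicity of $f$ together with the non-decrease of the remaining $f(\theta_j)$ then gives $\tau(W(b))>\tau(W(a))$.

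The main obstacle I expect is the careful handling of the spectral perturbation argument: one must treat continuous eigenvalue labelings through possible crossings and justify the formula for $\dot\theta_i$ for the merely Lipschitz causal curves natural in this setting. A more streamlined route would be to invoke the identity $\sum_i\theta_i=2\pi\mu$ on the lift to $\widetilde{\Se}$ recalled in Section 3, reducing strict monotonicity of the sum to the corresponding property of the Maslov quasimorphism established in \cite{Abbondandolo22}.
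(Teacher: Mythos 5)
Your overall strategy is the one the paper uses: continuity from continuity of the spectrum, non-decrease of each $\theta_i$ along causal curves quoted from Krein theory (\cite[Proposition 2]{Ekeland12}), and strictness from the fact that the positive semi-definite form determined by the causal tangent vector cannot vanish on every eigenvector of $W(t)$, so some Krein-positive angle strictly increases while the others do not decrease. The weak point is exactly the one you flag: the formula $\dot\theta_i=q_{\C}(v_i,v_i)/\kappa(v_i,v_i)$ presupposes differentiable eigenvalue branches and eigenvectors depending suitably on $t$, which can fail at eigenvalue crossings, and you leave this unresolved. The paper's proof avoids branch differentiability altogether by following Ekeland's difference-quotient argument: for $t_k\searrow t$ one takes eigenvalues $e^{i\theta_k}$ of $W(t_k)$ with Krein-normalised eigenvectors $x_k\to x$, where $x$ is an eigenvector of $W(t)$ with $\omega(x,W'(t)x)>0$, uses the exact identity
$$\omega\Bigl(\frac{W(t)-W(t_k)}{t-t_k}\,x,\,x_k\Bigr)=\frac{e^{i\theta}-e^{i\theta_k}}{t-t_k}\,\omega(x,x_k),$$
and passes to the limit to conclude $\theta_k>\theta$ for $t_k$ close to $t$; combined with the non-decrease from \cite[Proposition 2]{Ekeland12} this gives strict increase of $\tau$ between any two parameter values. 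To make your argument complete you should either run this difference-quotient version or work with the eigenvalues of $q_{\C}$ relative to $\kappa$ on each eigenspace instead of differentiating individual branches (note that causal curves are smooth by the paper's definition, so the Lipschitz issue you raise does not actually arise here).

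Two further remarks. The Krein-positive eigenvectors $v_1,\dots,v_n$ span only an $n$-dimensional subspace of $\C^{2n}$, so ``the eigenvectors of $W$ span $\C^{2n}$'' does not by itself yield $q_{\C}(v_{i_0},v_{i_0})>0$ for some $i_0\le n$. This is repaired by noting that the Hermitian extension of the real form $u\mapsto\omega(u,Xu)$ takes the same (real) value at $v$ and at $\bar v$, and the $\bar v_i$ are precisely the eigenvectors for the eigenvalues with negative imaginary part; hence vanishing at all $v_i$ would force vanishing on a full eigenbasis and, by Cauchy--Schwarz for positive semi-definite Hermitian forms, $X=0$, contradicting $X\in C(id)$. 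Finally, the proposed shortcut via $\sum_i\theta_i=2\pi\mu$ is not a shortcut: \cite{Abbondandolo22} does not hand you strict monotonicity of $\mu$ along causal curves in $\Se$; that strictness is essentially what this lemma is asked to prove.
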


\begin{remark}
In fact the proof of Theorem \ref{thm1} implies that $\tau$ is a Cauchy time function, i.e. its level sets are (topological) Cauchy surfaces.
\end{remark}

\begin{proof}
Let $W(t)$ be a causal curve in $\Se$.
It follows from \cite[Proposition 2]{Ekeland12} that $\tau$ is non-decreasing along $W(t)$.
Since $W'(t)\in C(W(t))$ there exists an eigenvector $x$ for some eigenvalue $e^{i\theta}$ of $W(t)$ with $\omega(x,W'(t)x)>0$ and $\kappa(x,x)=1$.
Following the proof of \cite[Proposition 2]{Ekeland12} take a sequence $t_k\searrow t$.
Then there exists a sequence of eigenvalues $e^{i\theta_k}$ for $W(t_k)$ 
with $\theta_k\rightarrow \theta$ and a sequence of eigenvectors $x_k$ with $\kappa(x_k,x_k)=1$ and $x_k\rightarrow x$.

As pointed out in \cite{Ekeland12} it follows that
$$\omega\left(\frac{W(t)-W(t_k)}{t-t_k} x, x_k\right)=\frac{(e^{i\theta}-e^{i\theta_k})}{t-t_k}\omega(x,x_k).$$
Taking the limit this implies that for $t_k$ close to $t$ we have that $\theta_k>\theta$, i.e. $ \tau(W(t_k))>\tau(t)$.
\end{proof}

\begin{lemma}\label{lem3}
Let $X\in C$ and $W_0\in \Se$. 
Then there exist $0<c_1,c_2<\infty$ such that $e^{tX}W_0\in \Se$ for all $t\in (-c_1,c_2)$ and $e^{-c_1X}W_0,e^{c_2X}W_0\notin \Se$.
In particular 
$$\lim\limits_{t\searrow -c_1}\tau(e^{tX}W_0)=-\infty, \lim\limits_{t\nearrow c_2}\tau(e^{tX}W_0)=\infty$$
\end{lemma}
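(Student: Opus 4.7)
The plan is to first define the maximal open interval on which $W(t):=e^{tX}W_0$ lies in $\Se$, then identify how $W$ exits $\Se$ at a finite endpoint to deduce the divergence of $\tau$, and finally to prove finiteness of the endpoints. Set $c_2:=\sup\{c>0 : e^{tX}W_0\in\Se \text{ for every } t\in[0,c)\}$ and define $c_1$ symmetrically in the negative direction. Openness of $\Se$ together with continuity of $t\mapsto W(t)$ forces $c_1,c_2\in(0,+\infty]$.

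Suppose $c_2<\infty$. By maximality, $W(c_2)\in\partial\Se$. The Krein-theoretic input used in Lemma \ref{lem1} (via \cite[Proposition 2]{Ekeland12}) shows that every argument $\theta_k^{W(t)}$ of an eigenvalue with positive imaginary part is non-decreasing along the future-directed causal curve $W$. Hence the first point of $\partial\Se$ that can be reached from $W_0$ in forward time is one at which some eigenvalue equals $-1$, i.e.\ some $\theta_k^{W(t)}\nearrow\pi$ as $t\nearrow c_2$; via the summand $\ln(\pi-\theta_k)$ in $\tau$ this forces $\tau(W(t))\to+\infty$. The backward case is symmetric: some $\theta_k\searrow 0$ and $\tau\to-\infty$.

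The main task is to prove $c_1,c_2<\infty$. I argue by contradiction, assuming $W(t)\in\Se$ for all $t\geq 0$, and split via the Jordan decomposition $X=X_s+X_n$ into commuting semisimple and nilpotent parts of $\mathfrak{sp}(2n)$. Since $X\in C$ forces $\sigma(X)\subseteq i\R$, the semisimple part $X_s$ has purely imaginary spectrum. If $X$ is semisimple ($X_n=0$), then $\{e^{tX}:t\in\R\}$ lies in a maximal torus of a maximal compact subgroup of $\Sp$ and has compact closure; hence $\{W(t):t\in\R\}$ is precompact, and Poincaré recurrence on the torus orbit yields $t_n\to+\infty$ with $W(t_n)\to W_0$. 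Continuity of $\tau$ gives $\tau(W(t_n))\to\tau(W_0)$, while Lemma \ref{lem1} forces $\tau(W(t_n))$ to be strictly increasing and bounded below by $\tau(W(t_1))>\tau(W_0)$ for $n\geq 1$, a contradiction. If $X_n\neq 0$ but $X_s$ still carries a non-zero imaginary eigenvalue, the Maslov quasimorphism satisfies $\mu(e^X)>0$; by homogeneity $\mu(e^{tX})=t\,\mu(e^X)\to+\infty$, and the quasimorphism bound $|\mu(e^{tX}w_0)-\mu(e^{tX})-\mu(w_0)|\leq C$ then forces $\mu(e^{tX}w_0)\to+\infty$, contradicting the upper bound $\mu<n/2$ valid on $\widetilde{\Se}$.

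The remaining case, and the main obstacle, is $X$ purely nilpotent: here neither orbit compactness nor growth of $\mu$ is available. Since $e^{tX}$ is polynomial in $t$, $\det(W(t)+I)=\det(W_0+e^{-tX})$ is a real polynomial $p(t)$ with $p(0)=\det(W_0+I)>0$; the plan is to show $p$ must admit a positive real zero, forcing an eigenvalue of $W(t)$ to equal $-1$ at finite $t$. This can be approached either by a direct determinantal analysis exploiting the positive semi-definiteness of $\omega(\cdot,X\cdot)$ together with $W_0\in\Se$, or by perturbing $X$ to $X+\epsilon J\in\mathrm{Int}(C)$, applying the Maslov argument to each perturbation, and passing to the limit with uniform exit-time bounds.
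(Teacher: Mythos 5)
Your overall architecture is sound and largely parallels the paper: you reduce everything to showing that the maximal interval $(-c_1,c_2)$ on which $e^{tX}W_0$ stays in $\Se$ is finite, and you correctly argue that a forward exit can only happen through the eigenvalue $-1$ (some $\theta_k\nearrow\pi$, with the other $\theta_j$ bounded away from $0$ by monotonicity), which gives $\tau\to+\infty$, and symmetrically $\tau\to-\infty$ at $-c_1$. Your treatment of the cases where $X$ has a nonzero (necessarily imaginary) eigenvalue is correct and in fact a pleasant variant of the paper's Case 1: the paper argues via divergence of the Maslov quasimorphism that the curve picks up the eigenvalues $\pm1$ infinitely often, while you either use recurrence of the torus orbit against strict monotonicity of $\tau$ (semisimple case) or the explicit bounds $0<\mu<n/2$ on $\widetilde{\Se}$ together with the quasimorphism inequality; both are legitimate, and the $\mu<n/2$ bound makes the contradiction cleaner than the paper's phrasing (modulo the same unproven-but-standard fact, also used implicitly by the paper, that the growth rate $c=\mu(e^{X})$ is strictly positive, i.e.\ that Krein-signature contributions of a semi-definite generator cannot cancel).

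However, the purely nilpotent case is a genuine gap, and it is exactly the case the paper has to work for. You state the goal (show that the polynomial $p(t)=\det(W_0+e^{-tX})$ has a positive real zero) but offer only two unexecuted ``plans''. The perturbation route $X\mapsto X+\epsilon J$ does not work as sketched: the Maslov argument bounds the exit time of $e^{t(X+\epsilon J)}W_0$ only by a quantity of order $1/\epsilon$ (the rotation rate contributed by the perturbation is $O(\epsilon)$ and the nilpotent part contributes nothing to $\mu$), so there is no uniform exit-time bound to pass to the limit $\epsilon\to0$ --- obtaining such a bound is precisely the content of the lemma in this case. The ``direct determinantal analysis'' is likewise only named, not performed; note also that $p(t)>0$ for all $t>0$ is not by itself absurd unless you supply an argument, since nothing forces the constant-term-type cancellations you would need. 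The paper's actual argument here is elementary but different: since $X$ is nilpotent, $e^{tX}W_0$ is a matrix polynomial in $t$, so every coefficient $P_k(t)$ of its characteristic polynomial is a polynomial in $t$; by Lemma \ref{lem1} (strict increase of $\tau$) the eigenvalues are not all constant, so some $P_k$ is non-constant, hence $|P_k(t)|\to\infty$ as $t\to\pm\infty$; but the $P_k$ are elementary symmetric functions of the eigenvalues, so some eigenvalue must diverge, which is impossible while the curve stays in $\Se$ (where all eigenvalues lie on $S^1$). Some argument of this kind (or a completed version of your determinant approach) is needed before your proof of the nilpotent case, and hence of the lemma, is complete.
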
 

\begin{proof}
Let $\theta_1(t),\cdots, \theta_n(t)$ be the arguments of the Krein positive eigenvalues of $e^{tX}W_0$.

1. case:
$X$ has an eigenvalue $\lambda\neq 0$.

Let $w(t)\subset \widetilde{\Sp}$ be a lift of $e^{tX}$ starting at $id$.
Then the Maslov quasimorphism satisfies $\mu(w(t))=tc$ for some $c>0$, i.e. $\mu(w)$ diverges.
Let $w_0$ be a lift of $W_0$. 
Since $\mu$ is a quasimorphism there exists a constant $C\geq 0$ such that $\mu(w(t)w_0)\in (\mu(w_0)+tc-C, \mu(w_0)+tc+C)$.
Hence $\mu(w(t)w_0)$ diverges.
It follows that $e^{tX}W_0$ takes the eigenvalues $1$ and $-1$ for infinitely many $t$, in particular it leaves $\Se$ in the future and in the past for finite $t$.

2. case:
The only eigenvalue of $X$ is $0$.

In this case $X$ is nilpotent since one has $(X)^{2n}=0$.
It follows that
$$e^{tX}W=W+tXW+\cdots + \frac{t^{2n-1}}{(2n-1)!}(X)^{2n-1}W.$$
In particular the characteristic polynomial $P_t$ is a monic polynomial of degree $2n$ whose coefficients are polynomials in $t$.
Write
$$P_t(\lambda)=\lambda^{2n}+P_{2n-1}(t)\lambda^{2n-1}+\cdots + P_{1}(t)\lambda+1.$$
Then due to \ref{lem1} at least one of the eigenvalues of $e^{tX}W$ moves counter-clockwise on the unit circle, i.e. there exists some non-constant $P_k(t)$.

Over $\C$ the polynomial $P_t(\lambda)$ can be factorized, i.e. the coefficients $P_i(t)$ depend polinomially on the eigenvalues of $e^{tX}W$.
Since $\lim\limits_{t\rightarrow \pm\infty} |P_k(t)|=\infty$, it follows that at least one of the eigenvalues of $e^{tX}W$ diverges for $t\rightarrow \pm\infty$.
In particular $e^{tX}W$ leaves $\Se$ for finite $t$ in the future and in the past.
\end{proof}

\begin{lemma}\label{lem2}
The positively elliptic region $\Se$ is causally geodesically connected, i.e. for $W_0\in \Se$ and $W_1\in J^+(W_0)$ there exists $X\in C$ with $W_1=e^{X}W_0$ and $e^{sX}W_0\in \Se$ for $s\in [0,1]$.
\end{lemma}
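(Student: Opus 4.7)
The plan is to realise $W_1$ as the endpoint of a length-maximising causal curve from $W_0$ and to identify that maximiser with a curve of the form $s\mapsto e^{sX}W_0$ using the classification of timelike geodesics of $G$ from \cite{Abbondandolo22}. A limit argument then extends the conclusion from the strictly timelike case to all causally related pairs.

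First I would dispose of the case $W_1\in I^+(W_0)\cap\Se$. By Theorem~\ref{thm1} the causal diamond $J^+(W_0)\cap J^-(W_1)$ is compact in $\Se$, and $\mathrm{dist}_G(W_0,W_1)>0$. An Avez--Seifert type theorem for closed cone structures (see \cite{Minguzzi192}) then produces a causal curve $\gamma:[0,1]\to\overline{\Se}$ from $W_0$ to $W_1$ maximising the Lorentz--Finsler length. Positive distance forces $\gamma$ to be timelike, and being a maximiser it must be a timelike geodesic of $G$. By \cite{Abbondandolo22} every such geodesic has the form $\gamma(s)=e^{sX}W_0$ with $X\in\mathfrak{sp}^+(2n)$; Lemma~\ref{lem3} then automatically gives $e^{sX}W_0\in\Se$ for every $s\in[0,1]$, since both endpoints lie in $\Se$ and the set of $s$ with $e^{sX}W_0\in\Se$ is, by Lemma~\ref{lem3}, an open interval.

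For general $W_1\in J^+(W_0)\cap\Se$, I would approximate by points $W_1^{(k)}\in I^+(W_0)\cap\Se$ with $W_1^{(k)}\to W_1$ (existence of such a sequence follows from the push-up property of cone structures combined with openness of $\Se$). The first step provides $X_k\in\mathfrak{sp}^+(2n)$ with $e^{X_k}W_0=W_1^{(k)}$. To extract a convergent subsequence the sequence $\{\|X_k\|\}$ must be bounded. Here Lemma~\ref{lem3} is crucial: for each $X_k$ the function $s\mapsto\tau(e^{sX_k}W_0)$ is finite at $s=0$ and $s=1$ (the latter uniformly in $k$ since $W_1^{(k)}\to W_1\in\Se$), while Lemma~\ref{lem3} prevents this function from remaining finite on all of $[0,1]$ if $\|X_k\|$ were unbounded. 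A subsequence therefore converges to some $X\in C(id)$ satisfying $e^XW_0=W_1$, and Lemma~\ref{lem3} once more yields $e^{sX}W_0\in\Se$ for $s\in[0,1]$.

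The main obstacle I expect is a fully rigorous proof of the uniform bound $\sup_k\|X_k\|<\infty$: although Lemma~\ref{lem3} controls each $X_k$ individually, the dependence of those bounds on $X_k$ requires careful tracking, and a cleaner implementation may use compactness of the causal diamond in the globally hyperbolic $\Se$ more directly. A secondary subtlety is justifying the Avez--Seifert step for the merely continuous Lorentz--Finsler metric $G$, but this is covered by the general framework of closed cone structures developed in \cite{Minguzzi192}.
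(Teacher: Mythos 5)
Your argument is circular relative to the logical structure this lemma sits in. The very first step invokes Theorem \ref{thm1} (global hyperbolicity of $\Se$), compactness of $J^+(W_0)\cap J^-(W_1)$, and an Avez--Seifert maximiser for $\mathrm{dist}_G$; but the paper proves Theorem \ref{thm1} \emph{from} Lemma \ref{lem2} (the lemma is exactly what allows points of the causal diamond to be written as $e^{t_kX_k}W_0=e^{-s_kY_k}W_1$ in that proof). So you may not use global hyperbolicity, nor any maximisation argument that presupposes it, unless you first give an independent proof of it --- which you do not. Two further steps are genuinely incomplete even granting that. First, you yourself flag that the uniform bound $\sup_k\|X_k\|<\infty$ in the limiting step is not established; this is not a technicality but the actual difficulty of the causal case, and ``Lemma \ref{lem3} prevents this'' does not work because the constants $c_1,c_2$ of Lemma \ref{lem3} depend on the direction $X$ with no claimed uniformity. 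Second, the assertion that Lemma \ref{lem3} makes $\{s: e^{sX}W_0\in\Se\}$ an interval, so that membership of both endpoints forces $e^{sX}W_0\in\Se$ for all $s\in[0,1]$, is false as stated: Lemma \ref{lem3} only describes the connected component of $0$, and causal curves such as $e^{tJ}$ do leave and re-enter $\Se$, so endpoint membership alone does not control the intermediate segment; one needs an additional monotonicity or time-function argument.

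For comparison, the paper's proof avoids all of this. For $W_1\in I^+(W_0)$ it shows directly that $W_1W_0^{-1}\in\Se$ by a homotopy argument: contract $W_0$ to $id$ along $e^{-sX_0}$ (Proposition ii.2 of \cite{Abbondandolo22}) and use the Krein-theoretic eigenvalue monotonicity along causal and timelike curves (\cite[Propositions 1 and 2]{Ekeland12}) to derive a contradiction (the first curve $t\mapsto H(s_0,t)$ leaving $\Se$ would simultaneously have to possess and not possess the eigenvalue $1$); the same deformation then gives $e^{sX_1}W_0\in\Se$ for all $s$. The general causal case is handled by the push-up property $J^+(W_0)\subset\overline{I^+(W_0)}$, normalising the directions to the compact set $D=C(id)\cap S$ so that only the time parameter varies, and combining the time function $\tau$ of Lemma \ref{lem1}, the divergence statement of Lemma \ref{lem3}, and the limit curve theorem of \cite{Minguzzi192} to force the limit direction to reach $W_1$ at finite time inside $\Se$. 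If you want to salvage your outline, you would need to prove global hyperbolicity of $\Se$ (or at least compactness of the relevant diamonds) without the lemma, and replace the two under-justified steps above by genuine arguments; as written, the proposal does not constitute a proof.
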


\begin{proof}
Claim: For $W_0\in \Se$ and $W_1\in I^+(W_0)$ we have that $W_1W_0^{-1}\in \Se$.

Suppose not.
Let $W(t)$ be a timelike curve in $\Se$ with $W(0)=W_0$ and $W(1)=W_1$.
Due to \cite[Proposition ii.2.]{Abbondandolo22} there exists $X_0\in C$ with $W_0=e^{X_0}$ and $e^{sX_0}\in \Se$ for all $s\in (0,1]$.
Define
\begin{align*}
H\colon [0,1]\times [0,1]\rightarrow \Sp\\
(s,t)\mapsto W(t)e^{-sX_0}.
\end{align*}
Then $\frac{d}{dt}H(s,t)\in C(H(s,t))$ and $\frac{d}{s}H(s,t)\in -C(H(s,t))$.
Note that $H(s,t)\in \Se\cup\{id\}$ for all $s$ and $t$ sufficiently small.
Since $\Se$ is open, the set of $s$ such that the curve $t\mapsto H(s,t)$ is contained in $\Se$ is open and non-empty.
Let
$$s_0:=\min \{s\in [0,1]| t\mapsto H(s,t) \nsubseteq  \Se \}.$$

Then $t\mapsto H(s_0,t)$ leaves $\Se$ at some $t_0$.
By \cite[Proposition 1]{Ekeland12} $H(s_0,t)\in \Se$ for all $t\neq 0$ small.
Assume that $t_0>0$ is minimal. 
Since $t\mapsto H(s_0,t)$ is contained in $\Se$ for $t\in (0,t_0)$ and is timelike, it follows from \cite[Proposition 2]{Ekeland12} that $H(s_0,t_0)$ does not have eigenvalue $1$.
On the other hand the curve $s\mapsto H(s_0-s,t_0)$ is causal and contained in $\Se$ for $s\in [0,s_0)$.
Hence by \cite[Proposition 2]{Ekeland12} $H(s_0,t_0)$ must have the eigenvalue $1$ which contradicts the previous observation.

Take $X_1\in C$ as in  \cite[Proposition ii.2.]{Abbondandolo22} with $e^{X_1}=W_1W_0^{-1}$ and $e^{sX_1}\in \Se$ for all $s\in (0,1]$.
Then $e^{X_1}W_0$ satisfies the assumptions of the Lemma.
Since $e^{X_1}\in I^+(e^{tX_1})$ for all $t\in [0,1)$ it follows analogously to the previous claim that $e^{sX_1}W_0=e^{(s-1)X_1}e^{X_1}W_0\in \Se$ for all $s\in (0,1]$.

Now let $W_1\in J^+(W_0)$.
It follows from the fact that $(\Se,C)$ is locally Lipschitz and \cite[Theorem 2.7]{Minguzzi192} that $J^+(W_0)\subset \overline{I^+(W_0)}$.
Choose a Riemannian metric on $\Sp$ and let $D\subset C(id)$ be the intersection of $C(id)$ with the unit sphere.
Then there exists $X_n\in D$ and $t_n$ with $e^{t_nX_n}W_0\rightarrow W_1$.
A subsequence of $X_n$ converges to some $X\in D$.

Suppose there is no $t<\infty$ such that $e^{tX}W_0=W_1$ and $e^{sX}W_0\in \Se$ for all $0<s<t$.
By \ref{lem3} the curve $e^{tX}W_0$ leaves $\Se$ for finite $t$, i.e. for all $c>0$ there exists some $t_{c}$ with $\tau(e^{t_{c}X}W_0)=c$.
In particular for $c>\tau(W_1)$ one has $\tau(e^{t_cX}W_0)>\tau(W_1)$.
On the other hand it follows from the limit curve theorem  (\cite[Theorem 2.14]{Minguzzi192}) that $e^{t_cX}W_0\in \overline{J^-}(W_1)$.
This contradicts $\tau$ being a time function.

Thus $W_1=e^{t_1X}W_0$ for some $t_1<\infty$.
\end{proof}

\begin{proof}[Proof of Theorem \ref{thm1}]
It remains to show that $J^+(W_0)\cap J^-(W_1)$ is compact for all $W_0,W_1\in\Se$.

Let $A_k$ be a sequence in $J^+(W_0)\cap J^-(W_1)$.
Like before define $D$ to be the intersection of $C(id)$ with a unit sphere.
By Lemma \ref{lem2} we can choose sequences $X_k,Y_k\in D$ and $t_k,s_k>0$ with $A_k=e^{t_kX_k}W_0=e^{-s_kY_k}W_1$.
Due to compactness $X_k\rightarrow X\in D$ and $Y_k\rightarrow Y\in D$ up to a subsequence.
Define $c_1(X),c_2(X)$ and $c_1(Y),c_2(Y)$ as in Lemma \ref{lem3}

For $t_k$ there are two cases:
\begin{itemize}
\item[a)] $$\lim\limits_{k\rightarrow \infty}t_k=t< c_2(X).$$
In this case $A_k\rightarrow e^{tX}\in J^+(W_0)$.
\item[b)] $$\liminf\limits_{k\rightarrow \infty}t_k\geq c_2(X).$$
In this case $\lim\limits_{k\rightarrow \infty}\tau(A_k)\rightarrow \infty$.
\end{itemize}

Similarly for $s_k$ one has
\begin{itemize}
\item[a)'] $$\lim\limits_{k\rightarrow \infty}s_k=s< c_1(Y).$$

In this case $A_k\rightarrow e^{-sY}\in J^-(W_1)$.
\item[b)'] $$\liminf\limits_{k\rightarrow \infty}t_k\geq c_1(X).$$
 In this case $\lim\limits_{k\rightarrow \infty}\tau(A_k)\rightarrow -\infty$.
\end{itemize}

One can easily see that if b) holds than neither a)' nor b)' can hold.
Similarly if b)' holds than neither a) nor b) can hold.
It follows that a) and b) hold, i.e. a subsequence of $A_k$ converges to some $A\in J^+(W_0)\cap J^-(W_1)$.

\end{proof}

All points and subsets in the following proofs are contained in the universal cover of $\Sp$.
To simplify notation we will from now on omit all tildes and e.g. denote $\widetilde{\Se}$ by $\Se$.

\begin{proof}[Proof of Corollary \ref{cor1}]
Let $W\in\Se$ with eigenvalues 

$\{e^{\pm i\theta_1},\cdots, e^{\pm i\theta_n}\}$ for $\theta_k\in (0,\pi)$.
Due to the bi-invariance of the cone structure we can pick some $A\in I^-(id)$ sufficiently close to $id$ such that $A\Se$ is globally hyperbolic and $id,W\in A\Se$.
Since $A\Se$ is causally convex and globally hyperbolic the Avez-Seifert Theorem implies that there exists a timelike geodesic between $id$ and $W$ maximising the Lorentzian length in $\Sp$ (the proof works as in the Lorentzian case \cite{Hawking}, see e.g. \cite{Minguzzi192} for the case of cone structures), i.e. a curve $W(t)$ with $W(0)=id$, $W(1)=W$ and 
$$\mathrm{dist}_G(id,W)=\int\limits_0^1G(W'(t))dt.$$
As shown in \cite{Abbondandolo22} all timelike geodesics are of the form $e^{tX}B$ for some $B\in\Sp$ and $X\in \mathfrak{sp}^+(2n)$.
Moreover \cite[Proposition ii.2.]{Abbondandolo22} implies that there exists a unique $X\in\mathfrak{sp}^+(2n)$ with $W=e^X$.
There exists a splitting of $\R^{2n}$ into symplectic planes $V_k$ such that the map $X$ is of the form 
$$X=\bigoplus\limits_{k=1}^n \theta_kJ_k$$
for some $\omega$-compatible complex structures $J_k$ on $V_k$.
It follows that $G(X)=\sqrt[n]{\theta_1\cdots\theta_n}$ is the length of the timelike geodesic $t\mapsto e^{tX}$ between $id$ and $W$.

The causal convexity of $\Se$ in the universal cover implies that this the unique timelike geodesic between $id$ and $W$ and therefore maximising, i.e. 
$$\mathrm{dist}_G(id,W)=\sqrt[n]{\theta_1\cdots\theta_n}.$$

For $W\in \overline{\Se}$ the result follows from the lower semi-continuity of $\mathrm{dist}_G$.
\end{proof}

\begin{proof}[Proof of Theorem \ref{thm2}]

\begin{itemize}

\item[i)]
It follows from \cite[Proposition ii.2.]{Abbondandolo22} and the analogous statement for $-id$ that $\Se\subset I^+(id)\cap I^-(-id)$.

Let $W\in  I^+(id)\cap I^-(-id)$.
Assume $W\notin \Se$.
Let $\gamma_1(t)$ be a timelike curve from $id$ to $W$ and $\gamma_2(t)$ be a timelike curve from $W$ to $-id$.
By \cite[Proposition ii.2.]{Abbondandolo22} and the analogous statement for $-id$, $\gamma_1$ and $\gamma_2$ have to enter $\Se$, i.e. there exist $t_1,t_2$ with $\gamma_1(t_1),\gamma_2(t_2)\in \partial\Se$.
Assume that $t_1$ and $t_2$ are minimal, respectively maximal with this property.
Then by \cite[Proposition 2]{Ekeland12} $\gamma_1(t_1)$ is the lift of a map that has eigenvalue $-1$ but not eigenvalue $1$.
Similarly $\gamma_2(t_2)$ is the lift of a map that has eigenvalue $1$ but not eigenvalue $-1$.
Considering the time function $f$ on $\Sp$ constructed in \cite{Abbondandolo22} this implies $f(\gamma_1(t_1))>f(\gamma_2(t_2))$.
This contradicts $\gamma_2(t_2)\in I^+(\gamma_1(t_1))$.

\item[ii)]

Clearly $W\in I^+(id)$.
Suppose $W\notin \overline{I^-(-id))}$.
One can choose $V\in I^-(id)$ such that $VW\notin \overline{I^-(-id))}$.
Due to the conjugation invariance of the cone structure $I^+(Id)\cap I^-(W)$ is globally hyperbolic if and only if $I^+(V)\cap I^-(WV)$ is globally hyperbolic.
Note that $id \in I^+(V)\cap I^-(WV)$.
Moreover there exist $W_1\in I^+(id)\cap I^-(WV)\cap\partial \Se$ with $J^+(id)\cap J^-(W_1)\neq\emptyset$ being compact.

Claim: There exists an inextensible causal curve $\gamma(t)$ through $W_1$ that does not have eigenvalue $1$ for all $t$.

First note that since $W_1\in I^+(id)\cap\partial \Se$ and $J^+(id)\cap J^-(W_1)$ is compact, there is an $X\in C(id)$ with $W_1=e^X$:
Choose a timelike curve $a(t)$ with $a(0)=id$ and $a(1)=W_1$ and a sequence $t_n\subset (0,1)$ with $t_n\rightarrow 1$
Define $D\subset C(id)$ like in the proof of Lemma \ref{lem2} as the intersection of the cone with a unit sphere.
Lemma \ref{lem2} implies the existence of $X_n\in D$ and $s_n$ with $e^{s_n X_n}=a(t_n)$.
Since $D$ is compact, a subsequence of $X_n$ converges to some $X\in D$.
Moreover due to compactness and the stable causality of the universal cover, the curve $s\mapsto e^{sX}$ leaves $J^+(id)\cap J^-(W_1)$ at some $s_{\infty}<\infty$.
It follows that $s_n\rightarrow s_{\infty}$ and $e^{s_{\infty}X}=W_1$.
By eventually perturbing $W_1$ we can assume that $X$ is timelike.

Proposition ii.2. in \cite{Abbondandolo22} implies that there exists a $W_1$-invariant splitting of $\R^{2n}$ into symplectic planes $V_k$ such that 
$$W_1 =\bigoplus\limits_{k=1}^n e^{\theta_kJ_k}.$$
Here $J_k$ denotes an $\omega$-compatible complex structure on $V_k$ and $\theta_k\in (0,\pi]$. 
Note that there exists a $j$ such that $\theta_j=\pi$.
W.l.o.g assume $j=n$.

For $n=1$ any point in $I^+(id)\cap \partial \Se$ lies on a null geodesic consisting of maps that only have eigenvalue $-1$(\cite{Abbondandolo01, Abbondandolo22}).

Look at the the causal curve
$$\gamma(t):=\bigoplus\limits_{k=1}^{n-1} e^{\theta_kJ_k}\oplus b(t),$$
where $b(t)$ denotes a null geodesic in $V_n$ with $b(0)= e^{\theta_nJ_n}$ that has only eigenvalue $-1$.
Then $\gamma(t)$ is null, non-constant and does not have Eigenvalue $1$ for all $t$.
Moreover $\gamma(t)\in\partial\Se$ for all $t\leq 0$.

The compactness of $J^+(id)\cap J^-(W_1)$ implies that there exists $t_0<0$ such that $\gamma(t_0)\in \partial J^+(id)$.
Like before there exists some $Y\in C(id)$ such that $\gamma(t_0)=e^Y$.
Since $\gamma(t_0)\in \partial J^+(id)$, one has $Y\in \partial C(id)$.
This contradicts $\gamma(t_0)$ not having eigenvalue $1$.

\item[iii)] 

Assume that $I^+(id)\cap I^-(w)=\Se$.
Then $w\in \overline{I^-(-id)}$ and $-id\in \overline{I^-(w)}$.
The existence of a time function on $\Sp$ implies that $w=-id$.

\end{itemize}
\end{proof}

\end{document}